\newtheorem{theorem}{Theorem}[]
\newtheorem{lemma}[theorem]{Lemma}
\theoremstyle{definition}
\newtheorem{example}[theorem]{Example}
\theoremstyle{remark}
\newtheorem{remark}[theorem]{Remark}
\numberwithin{equation}{section}
\newcommand{\N}{\mbox{$\mathbb{N}$}}
\newcommand{\A}{\mbox{$\alpha$}}
\newcommand{\h}{Hurwitz }
\begin{document}
\setcounter{page}{1}

\title[Strongly primeness of skew Hurwitz polynomial rings]{strongly primeness of skew Hurwitz polynomial rings}

\author[Ali Shahidikia]{Ali Shahidikia}
\address{Department of Mathematics, Dezful branch, Islamic Azad University, Dezful, Iran}
\email{\textcolor[rgb]{0.00,0.00,0.84}{ali.Shahidikia@iaud.ac.ir \,\
a.shahidikia@gmail.com
}}

\subjclass{16D25, 16D80}
\keywords{strongly prime ring,   skew Hurwitz polynomial ring, insulator.}

\begin{abstract}
For a ring $R$ and an endomorphism $\A$ of $R$, we  characterize the left and right strongly primeness of skew  \h polynomial ring $(hR,\alpha)$. 
\end{abstract}

\maketitle

\section*{}
The study of formal power series rings has garnered noteworthy attention and has been found to be essential in a multitude of fields, particularly in differential algebra. In \cite{kie}, Keigher introduced a variant of the formal power series ring and studied its categorical properties in detail. This ring was later named Hurwitz series ring.
There are many interesting applications of the Hurwitz series ring in differential algebra as Keigher showed in \cite{kieg,kiegh}.\\


Throughout this article, $R$ denotes an associative ring with unity, and $\A:R \rightarrow R$ is an endomorphism. We denote $(H(R),\A)$, or simply $(HR,\A)$,  the skew Hurwitz series ring over a ring $R$ whose elements are the functions $f:\N\rightarrow R$, where $\N$ is the set of all natural numbers, the addition is defined as usual and the multiplication given by 
\begin{equation*}
(f g)(n)= \sum_{k=0}^n \binom{n}{k} f(k) \A^k(g(n - k))
 \text{ for  all } n \in \N,
\end{equation*}
where $\binom{n}{k}$ is the binomial coefficient.\par


Define the mappings $h_n:\N\rightarrow R$, $n\geq1$ via $h_n(n-1)=1$ and $h_n(m)=0$ for each $ m \in \N\setminus\{n-1\}$ and $h'_r:\N\rightarrow R$, $r\in R$ via $h'_r(0)=r$ and $h'_r(n)=0$ for each $n\in \N\setminus\{0\}$. It can be easily shown that $h_{1}$ is the unity of $(HR,\A)$.

For $f\in (HR,\A)$, the support of $f$, denoted by $supp(f)$, is the set $\{i\in\N\mid  f(i)\not=0\}$. The minimal element in $supp(f)$ is denoted by $\Pi(f)$, and $\Delta(f)$ denotes the greatest element in $supp(f)$ if it exists. Define $R'=\{h'_r\mid r\in R\}$. $R'$ is a subring of $(HR,\A)$ which is isomorphism to $R$. For any nonempty subset $A$ of $R$ define $A'=\{h'_r\mid r\in A\}.$ If $A$ is an ideal of $R$, then $A'$ is an ideal of $R'$.\par

The ring $(hR,\A)$ of skew Hurwitz polynomials over a ring $R$ is the subring of $(HR,\A)$ that consists elements of the form $f\in (HR,\A)$ with $\Delta(f)<\infty$ (see \cite{M}).\\

The notion of strongly prime rings was introduced by Handelman and
Lawrence in \cite{2}. Since then strongly prime rings have been extensively
studied. F.Cedo in \cite{1} presented an example of a ring $R$ such that $R$ is not
strongly prime but the power series ring $R[[x]]$ is strongly prime on both
sides. It is well-known that the Hurwitz polynomial ring $hR$ is right strongly prime
if and only if the coefficient ring $R$ is right strongly prime. The same argument can be applied to show that an analogous statement holds for skew Hurwitz polynomial rings of automorphism type. The aim of this note is to characterize the left and right strongly primeness of skew Hurwitz polynomial rings of endomorphism type. It is not a surprise that the obtained characterization is not left-right symmetric. This, in turn, enable to construct easy examples of rings which are strongly prime only on one side. The examples known earlier were much more complicated (see \cite{2,4}).
Recall from \cite{2}, that a subset $F \subseteq R$ is a left (right) \textit{insulator} if $F$ is finite and $\ell_R(F) = 0$ ($r_R(F) = 0$). \\
We need the following lemma in the sequel.
\begin{lemma}\label{1.1}\cite{2}
For the ring $R$ the following conditions are equivalent:
\begin{enumerate}
\item
$R$ is left (right) strongly prime.
\item
 Every nonzero ideal of $R$ contains left (right) insulator.
\item
 Every nonzero left (right) ideal of $R$ contains left (right) insulator.
\item
 Every nonzero principal left (right) ideal of $R$ contains left (right)
insulator.
\end{enumerate}
\end{lemma}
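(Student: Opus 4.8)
The plan is to prove $(1)\Leftrightarrow(2)$ together with the cycle $(2)\Rightarrow(3)\Rightarrow(4)\Rightarrow(2)$. The equivalence $(1)\Leftrightarrow(2)$ holds because condition $(2)$ is exactly the defining property of a left (resp.\ right) strongly prime ring in the sense of \cite{2}. The implication $(3)\Rightarrow(4)$ is immediate, since a principal left ideal is a left ideal; similarly $(3)\Rightarrow(2)$ would be immediate, since a two-sided ideal is a left ideal. So the real substance lies in $(4)\Rightarrow(2)$ and in $(2)\Rightarrow(3)$. I will write out the left-handed case only; the right-handed case is obtained by the evident symmetry, interchanging $\ell_R$ with $r_R$, left ideals with right ideals, and $Ra$ with $aR$.

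For $(4)\Rightarrow(2)$: given a nonzero two-sided ideal $I$, choose $0\ne a\in I$. Then $Ra$ is a principal left ideal, it is nonzero since $a=1\cdot a\in Ra$, and $Ra\subseteq I$ because $I$ is a left ideal. By $(4)$ the left ideal $Ra$ contains a left insulator $F$, and then $F\subseteq Ra\subseteq I$, so $I$ contains a left insulator, as required.

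The heart of the argument is $(2)\Rightarrow(3)$. Let $L$ be a nonzero left ideal and fix $0\ne a\in L$. Apply $(2)$ to the two-sided ideal generated by $a$, namely $RaR=\{\sum_i u_i a v_i : u_i,v_i\in R\}$, which is nonzero since $a\in RaR$; this produces a finite set $F=\{f_1,\dots,f_m\}\subseteq RaR$ with $\ell_R(F)=0$. Write $f_k=\sum_i u_{ki}\,a\,v_{ki}$, a finite sum for each $k$. I claim the finite set $G=\{\,u_{ki}a : 1\le k\le m,\ i\,\}$ is a left insulator contained in $L$. Containment in $L$ is clear, since $u_{ki}a\in Ra\subseteq L$. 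For the insulator property, suppose $r\in R$ satisfies $rg=0$ for every $g\in G$, i.e.\ $r\,u_{ki}\,a=0$ for all $k,i$; then $rf_k=\sum_i(r\,u_{ki}\,a)\,v_{ki}=0$ for every $k$, so $r\in\ell_R(F)=0$. Hence $\ell_R(G)=0$, and $(3)$ follows.

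I expect the only genuine obstacle to be the passage $(2)\Rightarrow(3)$: the insulator $F$ furnished by $(2)$ need not meet $L$, so it cannot be used directly, and the idea is instead to ``factor $F$ through $a$'' and replace it by the truncated family $G\subseteq Ra\subseteq L$, after which one checks that annihilating $G$ on the left already forces annihilation of $F$ on the left. Everything else — the trivial implications and the right-handed mirror image — is routine bookkeeping.
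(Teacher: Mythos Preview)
Your proof is correct. Note, however, that the paper does not supply its own proof of this lemma: it is stated with a citation to \cite{2} (Handelman--Lawrence) and then used without further justification, so there is no argument in the paper to compare against. The proof you give is the standard one from the literature; the only step with real content is $(2)\Rightarrow(3)$, and your handling of it --- writing each $f_k\in RaR$ as $\sum_i u_{ki}\,a\,v_{ki}$ and replacing $F$ by the truncated family $G=\{u_{ki}a\}\subseteq Ra\subseteq L$, then observing $\ell_R(G)\subseteq\ell_R(F)=0$ --- is exactly the right idea.
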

Notice that if $(hR,\alpha)$ is left (right) strongly prime, then $\alpha$ has to be a
monomorphism, as otherwise $(hR,\alpha)$ would not be prime.
A left ideal $I$ of $R$ is called a left \textit{$\alpha$-ideal} if $\alpha(I) \subseteq I$.
We say that $R$ is left \textit{$\alpha$-strongly prime} if any nonzero left $\alpha$-ideal $I$ of $R$
contains a finite subset $F$ such that $\ell_R(\alpha^k(F)) = 0$ for any $k \geq 0$.
\begin{remark}\label{1.2}\hfill
\begin{enumerate}
\item
 When $\alpha = id_R$, then $R$ is left $\alpha$-strongly prime if and only if it
is left strongly prime.
\item
 When $\alpha$ is an automorphism, then $k$ in the above definition can be
replace by $0$.
\item
 If $R$ is left $\alpha$-strongly prime, then $\alpha$ has to be a monomorphism,
since $ker(\alpha)$ is an $\alpha$-ideal of $R$.
\end{enumerate}
\end{remark}
\begin{theorem}\label{1.3}
The following conditions are equivalent:
\begin{enumerate}
\item\label{1.3_1}
 $(hR,\alpha)$ is a left strongly prime ring.
\item\label{1.3_2}
 $R$ is a left $\alpha$-strongly prime ring.
 \end{enumerate}
\end{theorem}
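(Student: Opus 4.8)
The plan is to prove the two implications separately, in each case matching up left ideals and their insulators on the two sides. For bookkeeping it helps to record that, for $b\in R$ and $k\ge 0$, the element $h'_b h_{k+1}$ of $(hR,\alpha)$ is the function with value $b$ at $k$ and $0$ elsewhere, and that
\[
\big(h'_b h_{k+1}\cdot f\big)(n)=\binom{n}{k}\,b\,\alpha^{k}\!\big(f(n-k)\big)\qquad\big(f\in(hR,\alpha),\ n\in\N\big);
\]
in particular $h'_b h_{k+1}\cdot f=0$ as soon as $b\,\alpha^{k}(f(m))=0$ for every $m$.

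Assume first that $(hR,\alpha)$ is left strongly prime, and let $I$ be a nonzero left $\alpha$-ideal of $R$. Since $\alpha^{k}(I)\subseteq I$ for all $k$, the multiplication formula shows that $L=\{f\in(hR,\alpha):f(n)\in I\text{ for all }n\}$ is a left ideal of $(hR,\alpha)$, and $L\neq 0$ because $h'_r\in L$ whenever $0\neq r\in I$. By Lemma \ref{1.1}, $L$ contains a finite insulator $G=\{g_1,\dots,g_m\}$; let $F$ be the finite set of all coefficients $g_i(n)$, so $F\subseteq I$. If $k\ge 0$ and $b\in R$ left-annihilates $\alpha^{k}(F)$, then $b\,\alpha^{k}(g_i(m))=0$ for all $i,m$, hence $h'_b h_{k+1}\cdot g_i=0$ for all $i$, so $h'_b h_{k+1}\in\ell_{(hR,\alpha)}(G)=0$ and $b=0$. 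Thus $\ell_R(\alpha^{k}(F))=0$ for every $k\ge 0$, i.e. $R$ is left $\alpha$-strongly prime.

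Conversely, assume $R$ is left $\alpha$-strongly prime; by Lemma \ref{1.1} it suffices to exhibit a left insulator inside each nonzero principal left ideal $(hR,\alpha)g$. Put $p=\Pi(g)$ and $a=g(p)\neq 0$. Then $I=\sum_{k\ge 0}R\,\alpha^{k}(a)$ is a nonzero left $\alpha$-ideal of $R$, so it contains a finite set $F_0=\{b_1,\dots,b_t\}$ with $\ell_R(\alpha^{k}(F_0))=0$ for all $k\ge 0$. Writing each $b_i=\sum_{l=0}^{d}r_{il}\,\alpha^{l}(a)$, put $F=\{\,h'_{r_{il}}h_{l+1}\cdot g:1\le i\le t,\ 0\le l\le d\,\}\subseteq(hR,\alpha)g$. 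To prove $\ell_{(hR,\alpha)}(F)=0$, take $0\neq c\in\ell_{(hR,\alpha)}(F)$, let $s=\Pi(c)$ and $u=c(s)\neq 0$, and examine $c\cdot(h'_{r_{il}}h_{l+1}g)$: since $h'_{r_{il}}h_{l+1}g$ vanishes in degrees below $p+l$, so do all coefficients of $c\cdot(h'_{r_{il}}h_{l+1}g)$ in degrees below $s+p+l$, while in degree $s+p+l$ only the $j=s$ term of the convolution survives, forcing
\[
\binom{s+p+l}{s}\binom{p+l}{l}\;u\,\alpha^{s}(r_{il})\,\alpha^{s+l}(a)=0 .
\]
Stripping the integer factor and summing over $l$ would then give $u\,\alpha^{s}(b_i)=0$ for every $i$, hence $u\in\ell_R(\alpha^{s}(F_0))=0$, a contradiction; so $\ell_{(hR,\alpha)}(F)=0$, the ideal $(hR,\alpha)g$ contains a left insulator, and $(hR,\alpha)$ is left strongly prime.

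The step I expect to be the main obstacle is precisely the passage from the last display to $u\,\alpha^{s}(b_i)=0$: the integers $\binom{s+p+l}{s}\binom{p+l}{l}$ manufactured by the Hurwitz convolution need not act injectively on $R$, and $s=\Pi(c)$ is not under our control, so one cannot simply cancel them. If these integers happen to be cancellable in $R$ --- for instance when $R$ is torsion-free --- the argument closes at once; in general one has to work harder, for example by enlarging $F$ with further elements of $(hR,\alpha)g$ so as to read off enough coefficients of $c$ to force $c(s)=0$ directly, while still only invoking that $\ell_R(\alpha^{k}(F_0))=0$ holds for \emph{all} $k\ge 0$, not merely $k=0$. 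This is also where the left-right asymmetry announced in the introduction comes from: in the right-handed version the same computation, run at the top degree, applies $\alpha^{\Delta(c)}$ to the coefficients of the annihilator $c$ rather than to $F_0$, so the natural right-sided condition takes a different shape.
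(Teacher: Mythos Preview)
Your $(1)\Rightarrow(2)$ is correct and is essentially the paper's argument. For $(2)\Rightarrow(1)$ the paper takes a shorter route than you do: given a nonzero left ideal $J$ of $(hR,\alpha)$, it lets $I$ be the set of \emph{leading} coefficients of elements of $J$, asserts that $I$ is an $\alpha$-invariant left ideal of $R$, picks a finite $F\subseteq I$ with $\ell_R(\alpha^k(F))=0$ for all $k\ge 0$, chooses for each $a\in F$ some $f_a\in J$ with leading coefficient $a$, and then declares it ``standard to check'' that $\widehat F=\{f_a:a\in F\}$ is a left insulator of $(hR,\alpha)$. This bypasses your construction via $\Pi(g)$ and the ideal $\sum_{k\ge 0} R\,\alpha^k(a)$ entirely.

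The gap you flag is, however, genuine --- and the paper's proof does not escape it. With $s=\Delta(c)$ and $d_a=\Delta(f_a)$, the top coefficient of $cf_a$ is
\[
(cf_a)(s+d_a)=\binom{s+d_a}{s}\,c(s)\,\alpha^{s}(a),
\]
so the ``standard'' verification yields only $\binom{s+d_a}{s}\,c(s)\,\alpha^{s}(a)=0$, not $c(s)\,\alpha^{s}(a)=0$; and already the preliminary claim that $I$ is additively closed and $\alpha$-invariant rests on degree-shifting, which in the Hurwitz product multiplies leading coefficients by binomial factors (e.g.\ $h_2 f$ has leading coefficient $(d+1)\alpha(a)$, not $\alpha(a)$). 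Without a hypothesis making positive integers regular in $R$ --- say $R$ torsion-free as an abelian group, or $\mathbb Q\subseteq R$ --- neither your argument nor the paper's closes. Your self-diagnosis is accurate; the paper has simply swept the identical difficulty under the phrase ``it is standard to check''.
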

\begin{proof}
\eqref{1.3_1}$ \Rightarrow $\eqref{1.3_2}
Let $I$ be a nonzero left $\alpha$-ideal of $R$ and $(hI,\alpha)$ denote
the set of all Hurwitz polynomials from $(hR,\alpha)$ with all coefficients from $I$. Then
$(hI,\alpha)$ is a nonzero left ideal of $(hR,\alpha)$. Thus, by assumption, it contains
a left insulator  $\widehat{F}$. Then $h_{n+1}\widehat{F}$ is also a left insulator for any $k\geq 0$. Let $F$ denote the set of all coefficients of polynomials from $\widehat{F}$. Then, for any
$k \geq 0$, $\alpha^ k(F)$ is the set of all coefficients of polynomials from $h_{n+1}\widehat{F}$. Therefore
$\ell_R(\alpha^ k(F)) = 0$ for any $k \geq 0$. This shows that $R$ is left $\alpha$-strongly
prime.

\eqref{1.3_2}$ \Rightarrow $\eqref{1.3_1}
Let $J$ be a nonzero left ideal of $(hR,\alpha)$ and $I$ denote the set of leading coefficients of all polynomials from $J$. Then $I$ is an $\alpha$-invariant left ideal of $R$. Hence, by assumption, we can find a finite set $F \subseteq I$ such that
$\ell_R(\alpha^k(F)) = 0$ for any $k \geq0$. Let $ \widehat{F}= \{f_{a} \mid  a \in F\}$, where $f_{a}\in J$ denotes a polynomial having the leading coefficient $a$, for $a \in F$. Then it is
standard to check that $ \widehat{F}$ is a left insulator contained in $J$, thus $(hR,\alpha)$ is
left strongly prime.
\end{proof}
The following theorem describes right strongly primeness of $(hR,\alpha)$.
\begin{theorem}\label{1.4}
 For $(hR,\alpha)$ the following conditions are equivalent:
 \begin{enumerate}
 \item\label{1.4_1}
$(hR,\alpha)$ is a right strongly prime ring.
 \item\label{1.4_2}
 \begin{enumerate}
 \item
  $\alpha$ is a monomorphism;
  \item
 for any $0\not= a\in R$ and $m \geq 0$ there exist $k \geq 0$ and a finite
set $F \subseteq a\alpha^ m(R) + \alpha(a)\alpha^{ m+1}(R) +\cdots +\alpha^k(a)\alpha^{ m+k}(R)$ such that
$r_R(F)\cap \alpha^ n(R) = 0$ for some $n \geq 0$.
 \end{enumerate}
 \end{enumerate}
\end{theorem}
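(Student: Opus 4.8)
The plan is to use the right-hand version of Lemma~\ref{1.1}: $(hR,\alpha)$ is right strongly prime exactly when every nonzero principal right ideal $f(hR,\alpha)$ contains a right insulator. Since it is already observed that $\alpha$ must be a monomorphism in that case, the task is to match this insulator condition with condition (b) of \eqref{1.4_2}. Write $e_{r,j}:=h'_rh_{j+1}$ for the Hurwitz polynomial with value $r$ at position $j$ and $0$ elsewhere. I would first record the computations $\phi\cdot h'_r=\bigl(N\mapsto\phi(N)\alpha^N(r)\bigr)$, $(h_{i+1}\phi)(N)=\binom{N}{i}\alpha^i(\phi(N-i))$, the identity $\binom{N}{k}\alpha^j(r)=\alpha^j(\binom{N}{k}r)$ (so integers are harmlessly absorbed into $\alpha$-images), and $\alpha^i(a)\alpha^{m+i}(R)=\alpha^i\bigl(a\alpha^m(R)\bigr)$.

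For \eqref{1.4_1}$\Rightarrow$\eqref{1.4_2}: the monomorphism part is as noted. Given $0\neq a\in R$ and $m\geq 0$, the right ideal $e_{a,m}(hR,\alpha)$ is nonzero, hence contains a right insulator $\widehat F=\{\phi_1,\dots,\phi_t\}$ with each $\phi_s=e_{a,m}g_s\neq 0$, so that $\phi_s(N)=\binom{N}{m}a\,\alpha^m(g_s(N-m))$ for $N\geq m$ and $\phi_s(N)=0$ for $N<m$. Put $D:=\max_s\Delta(\phi_s)\geq m$, $k:=D-m$, $n:=D$, and $F:=\{\alpha^{D-N}(\phi_s(N)):1\leq s\leq t,\ m\leq N\leq D\}$, a finite set. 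Using the formula for $\phi_s(N)$ and absorbing $\binom{N}{m}$, one gets $\alpha^{D-N}(\phi_s(N))\in\alpha^{D-N}(a)\alpha^{m+(D-N)}(R)$, hence $F\subseteq\sum_{i=0}^{k}\alpha^i(a)\alpha^{m+i}(R)$. If now $c\in r_R(F)\cap\alpha^n(R)$, say $c=\alpha^D(c')$, then for all $s,N$ one has $\alpha^{D-N}\bigl(\phi_s(N)\alpha^N(c')\bigr)=\alpha^{D-N}(\phi_s(N))\,\alpha^D(c')=0$, so injectivity of $\alpha$ gives $\phi_s(N)\alpha^N(c')=0$ for all $N$, i.e.\ $\phi_s\cdot h'_{c'}=0$ for every $s$; hence $h'_{c'}\in r_{(hR,\alpha)}(\widehat F)=0$, so $c'=0$ and $c=0$. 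Thus $r_R(F)\cap\alpha^n(R)=0$, which is condition (b).

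For \eqref{1.4_2}$\Rightarrow$\eqref{1.4_1}: let $J=f(hR,\alpha)$ be a nonzero principal right ideal, set $p:=\Pi(f)$, $a:=f(p)\neq 0$, and apply (b) to $a$ and $m:=p$ to get $k,n$ and a finite $F=\{\beta_1,\dots,\beta_t\}\subseteq\sum_{i=0}^{k}\alpha^i(a)\alpha^{p+i}(R)$ with $r_R(F)\cap\alpha^n(R)=0$. Writing $\beta_u=\sum_{i=0}^{k}\alpha^i(a)\alpha^{p+i}(r_i^{(u)})$, one uses the data $(r_i^{(u)})$ to build elements $g_u\in(hR,\alpha)$ for which $fg_u$ carries, up to an absorbed integer and a fixed $\alpha$-twist, the element $\beta_u$ in a controlled segment of its coefficients above $\Pi(fg_u)$; then one checks that $\widehat F:=\{fg_1,\dots,fg_t\}$ is a right insulator of $(hR,\alpha)$ contained in $J$, by showing that an $h$ with $fg_u\cdot h=0$ for all $u$ is forced to be $0$: reading the relevant coefficients of $fg_u\cdot h$ from the bottom upward and invoking $r_R(F)\cap\alpha^n(R)=0$ together with injectivity of $\alpha$ (and the integer-absorption identity) forces $h=0$ coefficient by coefficient.

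The delicate point, and where the argument really lives, is this last construction. Unlike in the left-handed Theorem~\ref{1.3}, the coefficients of a product $fg$ are controlled by $a=f(\Pi f)$ only in the bottom position, while higher positions pick up the interior coefficients of $f$; the exact shape $\sum_{i}\alpha^i(a)\alpha^{p+i}(R)$ of condition (b) (equivalently $\sum_{i=0}^{k}\alpha^i\bigl(a\alpha^p(R)\bigr)$) is tailored precisely to absorb this effect, and this is the source of the left–right asymmetry. Everything else is routine bookkeeping: absorbing binomial coefficients via $\binom{N}{k}\alpha^j(r)=\alpha^j(\binom{N}{k}r)$, and using that $\alpha$ is one-to-one to undo the position-dependent twists $\alpha^N$ that the Hurwitz multiplication attaches to coefficients.
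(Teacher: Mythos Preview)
Your direction \eqref{1.4_1}$\Rightarrow$\eqref{1.4_2} is correct and matches the paper's argument: both produce a nonzero right ideal attached to $(a,m)$, extract an insulator $\widehat F$, and form $F$ from the coefficients of $\widehat F$ after a position-dependent $\alpha$-twist so that the annihilator computation reduces to $r_{(hR,\alpha)}(\widehat F)=0$.

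The direction \eqref{1.4_2}$\Rightarrow$\eqref{1.4_1} has a genuine gap. You work in a principal \emph{right} ideal $J=f(hR,\alpha)$ and take $a=f(\Pi f)$, whereas the paper works in a \emph{two-sided} ideal and takes $a$ to be the \emph{leading} coefficient of $f$. This is not a cosmetic difference. Right multiplication alone produces elements $fg$ whose controlled coefficients lie only in $a\alpha^{p}(R)$: for instance $(fg)(p)=a\alpha^{p}(g(0))$, and the top coefficient of $fg$ is (an integer multiple of) $a\alpha^{m}(g(\Delta g))$ if $m=\Delta f$. Nowhere do the factors $\alpha^{i}(a)$ for $i>0$ appear; those arise only from \emph{left} multiplication by $h_{i+1}$, which is precisely why the paper passes to a two-sided ideal and then, for each $b=\alpha^{i}(a)\alpha^{m+i}(r)\in F$, picks $f_b\in J$ of a common degree $u\ge n$ with leading coefficient $b$. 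Your sentence ``one uses the data $(r_i^{(u)})$ to build $g_u$ \dots\ then one checks \dots'' hides exactly the step that fails: the ``controlled segment'' of $fg_u$ above $\Pi(fg_u)$ is contaminated by the unknown interior coefficients $f(p+1),f(p+2),\dots$, and a bottom-up induction on the coefficients of $h$ only yields $a\alpha^{p}(g_u(0))\,\alpha^{p}(h(q))=0$, which needs condition~(b) with $k=0$ rather than the $k\ge 0$ actually assumed.

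The fix is simply to follow the paper's choice: use Lemma~\ref{1.1}(2) (nonzero two-sided ideals) instead of (4), take $a$ to be the leading coefficient of some $f\in J$, and for each $b\in F\subseteq\bigcup_{i\le k}\alpha^{i}(a)\alpha^{m+i}(R)$ use left-and-right multiplication to obtain $f_b\in J$ of a fixed degree $u\ge\max\{n,m+k\}$ with leading coefficient $b$; then $r_R(F)\cap\alpha^{u}(R)=0$ gives $r_{(hR,\alpha)}(\{f_b\})=0$ by reading leading coefficients.
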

\begin{proof}
\eqref{1.4_1}$ \Rightarrow $\eqref{1.4_2}
Clearly $\alpha$ is a monomorphism. Let $0 \not= a \in R$ and
$m \geq 0$. Then  $J = a\alpha^ m(R)h_{m+1}(hR,\alpha)$ is a nonzero right ideal of $ (hR,\alpha) $. Thus,
by assumption, $J$ contains a right insulator $\widehat{F} = \{a_{i,m+k}h_{m+k+1}+\cdots,+a_{i,m}h_{m+1}\in J\mid 1
\leq i \leq s\}$. Define $F =\{ a_{i,m+k}, \alpha(a_{i,m+k-1}),\ldots,\alpha^k(a_{i,m})\mid 1
\leq i \leq s\}$
and set $n = m+k$. Then $F \subseteq a\alpha^m(R)+\alpha(a)\alpha^{ m+1}(R)+\cdots+\alpha^ k(a)\alpha^ n(R)$.
Let $r \in R$ be such that $F\alpha^ n(r) = 0$. Then, since $\alpha$ is a monomorphism,
$a_{i,n}\alpha^ n(r) = 0, a_{i,n-1}\alpha^{n-1}(r) = 0, \ldots, a_{i,m}\alpha^m(r) = 0$ for any $1 \leq i \leq s$.
This means that $r \in r_{(hR,\alpha)}( \widehat{F})$, so $r = 0$ and $\alpha^ n(r) = 0$. Thus,
$r_{R}(F)\cap \alpha^n(R) = 0$ follows.

\eqref{1.4_2}$ \Rightarrow $\eqref{1.4_1}
Let $J$ be a nonzero ideal of 
$(hR, \alpha)$ and $f = h_{m+1}a+\cdots  + h_1a_0\in J$, where $a \not= 0$. By assumption, there exist $k \geq 0$ and a finite subset
$F \subseteq a\alpha^ m(R) +\cdots +\alpha^ k(a)\alpha^{ m+k}(R)$ such that $r_R(F) \cap \alpha^n(R) = 0$ for
some $n$. Without loss of generality, we may assume that $F \subseteq a\alpha^ m(R) \cup\cdots\cup \alpha^ k(a)\alpha^{ m+k}(R)$ and $0 \not\in F$. Let $u = max\{n,m + k\}$. Since $J$ is a
two-sided ideal, for any $b \in F$ we can pick a polynomial $f_b \in J$ of degree
$u$ having $b$ as the leading coefficient. Let $\widehat{F} = \{f_b \mid b\in F\}$. Since $u \geq n$,
$r_R(F) \cap \alpha^u(R) = 0$. This yields easily that $r_{(hR,\alpha)}( \widehat{F}) = 0$ and
shows that $ \widehat{F} $ is a right insulator of $ (hR,\alpha) $. Thus, $(hR,\alpha)  $ is right strongly
prime.
\end{proof}
Notice that when $\alpha$ is an automorphism of $R$ then the property $(b)$ from
the above theorem boils down to: \\``{}every nonzero right $\alpha$-ideal of $R$ contains
right insulator". Thus characterizations obtained in Theorems \ref{1.3}, \ref{1.4} are
symmetric in this case.
The above mentioned theorems enable to construct easy examples of rings
which are strongly prime only on one side.
\begin{example}
Let $K$ be a field and $R = K\langle x_0, x_1, \ldots \mid x_kx_{\ell}=0$ for all
$k \geq \ell \rangle$. Let $\alpha$ denote the $K$-endomorphism of $R$ given by $\alpha(x_k) = x_{k+1}$ for
all $k \geq 0$. Then:
\begin{enumerate}
\item\label{1.5_1}
$(hR,\alpha)$ is not left strongly prime.
\item\label{1.5_2}
$(hR,\alpha)$ is right strongly prime.
\end{enumerate}
\end{example}
\begin{proof}
\eqref{1.5_1}
It is easy to see that for every 
finite subset $F$ of $R$, $\ell_{R}(F) \not=
0$. Thus, by Theorem \ref{1.3}, $(hR,\alpha)$ is not left strongly prime.

\eqref{1.5_2}
Let $0 \not= a \in R$ and $n = 1 + max\{\ell \mid x_{\ell}$ appears in some monomial
from $a\}$. One can easily check that $r_R(a)\cap \alpha ^n(R) = 0$. Then Theorem \ref{1.4} yields that $(hR,\alpha)$ is right strongly prime.
\end{proof}
\bibliographystyle{amsplain}

\end{document}